\newcommand{\gl}{\lambda}
\newcommand{\gs}{\sigma}
\newcommand{\gO}{\Omega}
\newcommand{\cA}{\mathcal{A}}
\newcommand{\cB}{\mathcal{B}}
\newcommand{\N}{\mathbb{N}}
\newcommand{\R}{\mathbb{R}}
\newtheorem*{theorem*}{Theorem}
\newtheorem{lemma}{Lemma}
\newtheorem{corollary}[lemma]{Corollary}
\newtheorem{definition}[lemma]{Definition}
\theoremstyle{remark}
\newenvironment{enum_a}
    {\begin{enumerate}}
    {\end{enumerate}}
\newif\if@golden  \@goldentrue
\newcommand{\f@ctor}{1}
\newlength{\aiv@width}  \setlength{\aiv@width}{210mm}
\newlength{\aiv@height} \setlength{\aiv@height}{297mm}
\newlength{\tmp@width}  \setlength{\tmp@width}{\aiv@width}
\newlength{\tmp@height} \setlength{\tmp@height}{\aiv@height}
\if@golden\setlength{\textwidth}{33pc}
  \else\setlength{\textwidth}{36pc}\fi
\relax\setlength{\textwidth}{29pc}\or
\or\setlength{\textwidth}{33pc}\fi
\relax\setlength{\textwidth}{31pc}\or
\or\setlength{\textwidth}{35pc}\fi\fi
\relax\renewcommand{\f@ctor}{53}
  \or\renewcommand{\f@ctor}{46}\or\renewcommand{\f@ctor}{43}\fi
\relax\renewcommand{\f@ctor}{51}\or
  \renewcommand{\f@ctor}{45}\or\renewcommand{\f@ctor}{42}\fi\fi
\relax \renewcommand{\f@ctor}{46}
  \or\renewcommand{\f@ctor}{43}\or\renewcommand{\f@ctor}{43}\fi
\relax\renewcommand{\f@ctor}{43}
  \or\renewcommand{\f@ctor}{40}\or\renewcommand{\f@ctor}{40}\fi\fi\fi
\let\comp\circ
\newcommand{\Ra}{\space$\Rightarrow$\space}
\newcommand{\Coe}{\ensuremath C_0(E)}
\date{February 18, 2011}
\title[Feller Semigroups and Resolvents]{%
A Note on Feller Semigroups and Resolvents}
\author[V.~Kostrykin]{Vadim Kostrykin}
\address{Vadim Kostrykin\newline
Institut f\"ur Mathematik\newline
Johannes Gutenberg--Universit\"at\newline
D--55099 Mainz, Germany}
\email{kostrykin@mathematik.uni-mainz.de}
\author[J.~Potthoff]{J\"urgen Potthoff}
\address{J\"urgen Potthoff\newline
Institut f\"ur Mathematik, Universit\"at Mannheim\newline
D--68131 Mann\-heim, Germany}
\email{potthoff@math.uni-mannheim.de}
\author[R.~Schrader]{Robert Schrader}
\address{Robert Schrader\newline
Institut f\"{u}r Theoretische Physik\newline
Freie Universit\"{a}t Berlin, Arnimallee~14\newline
D--14195 Berlin, Germany}
\email{schrader@physik.fu-berlin.de}
\subjclass[2010]{60J25, 60J35, 47D07}
\keywords{Feller processes, Feller semigroups, Feller resolvents}
\begin{document}
\begin{abstract}
Various equivalent conditions for a semigroup or a resolvent generated
by a Markov process to be of Feller type are given.
\end{abstract}

\maketitle
\thispagestyle{empty}

The Feller property of the semigroup generated by a Markov process plays
a prominent role in the theory of stochastic processes. This is mainly due to
the fact that if the Feller property holds true, then --- under the additional
assumption of right continuity of the paths --- the simple Markov property implies the
strong Markov property (e.g., \cite[Theorem~III.3.1]{ReYo91} or
\cite[Theorem~III.15.3]{Wi79}).

However, in many instances it is of advantage to consider the associated resolvent
instead of the semigroup. Therefore we present in this note a result
which states various forms of the equivalence of the Feller property as expressed
in terms of the semigroup or of the resolvent. The material here seems to be quite
well-known, and our presentation of it owes very much to~\cite{Ra56} --- most notably
the inversion formula for the Laplace transform, equation~\eqref{inv_L} in connection with
lemma~\ref{lem_5}. On the other hand, we were not able to locate a reference
where the results are collected and stated in the form of the theorem given below.

Assume that $(E,d)$ is a locally compact separable metric space with Borel
$\gs$--algebra denoted by $\cB(E)$. $B(E)$ denotes the space of bounded measurable
real valued functions on $E$, $\Coe$ the subspace of continuous functions vanishing
at infinity. $B(E)$ and $\Coe$ are equipped with the sup-norm $\|\,\cdot\,\|$.

The following definition is as in~\cite{ReYo91}:
\begin{definition}  \label{def_1}
    A \emph{Feller semigroup} is a family $U=(U_t,\,t\ge 0)$ of positive
    linear operators on $\Coe$ such that
    \begin{enum_a}
        \item $U_0=\text{id}$ and $\|U_t\|\le 1$ for every $t\ge 0$;
        \item $U_{t+s} = U_t\comp U_s$ for every pair $s$, $t\ge 0$;
        \item $\lim_{t\downarrow 0} \|U_t f - f\|=0$ for every $f\in \Coe$.
    \end{enum_a}
\end{definition}

Analogously we define
\begin{definition}  \label{def_2}
    A \emph{Feller resolvent} is a family $R=(R_\gl,\,\gl>0)$ of positive
    linear operators on $\Coe$ such that
    \begin{enum_a}
        \item $\|R_\gl\|\le \gl^{-1}$ for every $\gl>0$;
        \item $R_\gl - R_\mu = (\mu-\gl) R_\gl\comp R_\mu$ for every
                pair $\gl$, $\mu>0$;
        \item $\lim_{\gl\to\infty}\|\gl R_\gl f - f\|=0$ for every $f\in\Coe$.
    \end{enum_a}
\end{definition}

In the sequel we shall focus our attention on semigroups $U$ and resolvents $R$
associated with an $E$--valued Markov process, and which are \emph{a priori} defined
on $B(E)$. (In our notation, we shall not distinguish between $U$ and $R$ as defined
on $B(E)$ and their restrictions to $\Coe$.)

Let $X = (X_t,\,t\ge 0)$ be a Markov process with state space $E$, and let
$(P_x,\,x\in E)$ denote the associated family of probability measures on some measurable
space $(\gO,\cA)$, so that in particular $P_x(X_0=x) = 1$. $E_x(\,\cdot\,)$ denotes the
expectation with respect to $P_x$. We assume throughout that for every $f\in B(E)$
the mapping
\begin{equation*}
    (t,x) \mapsto E_x\bigl(f(X_t)\bigr)
\end{equation*}
is measurable from $\R_+\times E$ into $\R$. The semigroup $U$ and resolvent $R$
associated with $X$ act on $B(E)$ as follows. For $f\in B(E)$, $x\in E$, $t\ge 0$,
and $\gl>0$ set
\begin{align}
    U_t f(x)   &= E_x\bigl(f(X_t)\bigr),  \label{eq_1}\\
    R_\gl f(x) &= \int_0^\infty e^{-\gl t} U_t f (x)\,dt.  \label{eq_2}
\end{align}
Property~(a) of Definitions~\ref{def_1} and \ref{def_2} is obviously satisfied. The
semigroup property, (b) in definition~\ref{def_1}, follows from the Markov property
of $X$, and this in turn implies the resolvent equation, (b) of
definition~\ref{def_2}. Moreover, it follows also from the Markov property of $X$
that the semigroup and the resolvent commute. On the other hand, in general neither
the property that $U$ or $R$ map $\Coe$ into itself, nor the strong continuity
property (c) in Definitions~\ref{def_1}, \ref{def_2} hold true on $B(E)$ or on
$\Coe$.

If $W$ is a subspace of $B(E)$ the resolvent equation shows that the image of $W$
under $R_\gl$ is independent of the choice of $\gl>0$, and in the sequel we shall
denote the image by $RW$. Furthermore, for simplicity we shall write $U\Coe\subset
\Coe$, if $U_t f\in\Coe$ for all $t\ge 0$, $f\in\Coe$.

\begin{theorem*} \label{thm}
The following statements are equivalent:
\begin{enum_a}
    \item $U$ is Feller.
    \item $R$ is Feller.
    \item $U\Coe\subset\Coe$, and for all $f\in\Coe$, $x\in E$,
            $\lim_{t\downarrow 0} U_t f(x) = f(x)$.
    \item $U\Coe\subset\Coe$, and for all $f\in\Coe$, $x\in E$,
            $\lim_{\gl\rightarrow \infty} \gl R_\gl f(x) = f(x)$.
    \item $R\Coe\subset\Coe$, and for all $f\in\Coe$, $x\in E$,
            $\lim_{t\downarrow 0} U_t f(x) = f(x)$.
    \item $R\Coe\subset\Coe$, and for all $f\in\Coe$, $x\in E$,
            $\lim_{\gl\rightarrow \infty} \gl R_\gl f(x) = f(x)$.
\end{enum_a}
\end{theorem*}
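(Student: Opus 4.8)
The plan is to run the cyclic chain (a)$\Rightarrow$(c)$\Rightarrow$(d)$\Rightarrow$(f)$\Rightarrow$(b)$\Rightarrow$(a), together with the short detour (a)$\Rightarrow$(e)$\Rightarrow$(f) to absorb~(e); all implications except the last, (b)$\Rightarrow$(a), should be elementary. The implication (a)$\Rightarrow$(c) is immediate, since norm convergence forces pointwise convergence and a Feller semigroup maps $\Coe$ into itself by definition. For (c)$\Rightarrow$(d) and for (e)$\Rightarrow$(f) only the limit clause changes while the mapping clause is left untouched, so it will suffice to establish the \emph{Abelian} statement that pointwise right-continuity of the semigroup implies pointwise convergence of the resolvent. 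Substituting $s=\gl t$ in~\eqref{eq_2} gives
\begin{equation*}
    \gl R_\gl f(x) = \int_0^\infty e^{-s}\, U_{s/\gl} f(x)\,ds,
\end{equation*}
and since $|U_{s/\gl}f(x)|\le\|f\|$ while $U_{s/\gl}f(x)\to f(x)$ as $\gl\to\infty$ for each fixed $s$, dominated convergence will yield $\gl R_\gl f(x)\to f(x)$.

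For (d)$\Rightarrow$(f) and (a)$\Rightarrow$(e) the limit clause is untouched and I would upgrade the mapping property from $U\Coe\subset\Coe$ to $R\Coe\subset\Coe$. Fixing $f\in\Coe$, continuity of $x\mapsto R_\gl f(x)$ follows from dominated convergence in~\eqref{eq_2}; for the vanishing at infinity I would take any sequence $x_n$ leaving every compact set (using that $E$ is $\gs$-compact), observe that $U_t f(x_n)\to 0$ for each $t$ because $U_t f\in\Coe$, and conclude $R_\gl f(x_n)\to 0$ by dominated convergence, whence $R_\gl f\in\Coe$. The implication (f)$\Rightarrow$(b) is the point where pointwise convergence of the resolvent must be promoted to the norm convergence demanded by Definition~\ref{def_2}(c), and I plan to combine two observations. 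First, the resolvent equation gives, for $g=R_\mu h$ in the range $R\Coe$, the identity $\gl R_\gl g - g = (\gl-\mu)^{-1}(\mu R_\mu h-\gl R_\gl h)$, whose right-hand side tends to $0$ in norm as $\gl\to\infty$ by the uniform bound $\|\gl R_\gl\|\le 1$; hence $\gl R_\gl\to\mathrm{id}$ in norm on $\overline{R\Coe}$. Second, identifying the dual of $\Coe$ with the finite signed Radon measures $M(E)$, any $\nu\in M(E)$ annihilating $R\Coe$ satisfies $\int \gl R_\gl f\,d\nu=0$, and dominated convergence (the integrand is bounded by $\|f\|$ and $\nu$ is finite) forces $\int f\,d\nu=0$; thus $R\Coe$ is dense, and the norm convergence extends to all of $\Coe$.

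It remains to prove (b)$\Rightarrow$(a), which I expect to be the main obstacle. Since $R$ is a Feller resolvent, the Hille--Yosida theory produces a genuine Feller semigroup $T=(T_t,\,t\ge 0)$ on $\Coe$ whose Laplace transform is $R$, and the task is to identify this $T$ with the process semigroup $U$ \emph{for every} $t$, not merely almost everywhere. For fixed $f\in\Coe$ and $x\in E$ the functions $t\mapsto U_t f(x)$ and $t\mapsto T_t f(x)$ are bounded and share the Laplace transform $\gl\mapsto R_\gl f(x)$, so injectivity of the Laplace transform gives $U_t f(x)=T_t f(x)$ for almost every $t$. The difficulty is that $t\mapsto U_t f(x)$ is assumed only measurable, so this almost-everywhere identity does not by itself yield the mapping property $U\Coe\subset\Coe$ nor the pointwise right-continuity at $t=0$, both of which involve the values of $U_t$ on a set of possible exceptional times accumulating at the origin.

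This gap is precisely what I would close with the inversion formula~\eqref{inv_L} together with lemma~\ref{lem_5}: applied pointwise in $x$, it should reconstruct $U_t f(x)$ from the values $R_\gl f(x)$ at \emph{every} $t$, forcing $U_t f=T_t f\in\Coe$ for all $t$ and transferring the strong continuity of $T$ to $U$; this is the hardest and most technical step. Once (a) is in hand, the forward mapping transfer of the second paragraph gives (a)$\Rightarrow$(e), and the Abelian step of the first paragraph gives (e)$\Rightarrow$(f), tying the sixth statement into the cycle and completing the proof that all six statements are equivalent.
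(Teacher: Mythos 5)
Your plan is correct and all six implications go through; much of your toolkit coincides with the paper's (your Abelian step (c)$\Rightarrow$(d) is lemma~\ref{lem_3}, your mapping transfer $U\Coe\subset\Coe\Rightarrow R\Coe\subset\Coe$ is the first half of lemma~\ref{lem_6}, and your duality argument for density of $R\Coe$ is exactly the proof of lemma~\ref{lem_7}), but you diverge in two substantive places. First, for (f)$\Rightarrow$(b) you get strong continuity of the resolvent intrinsically: the resolvent equation gives $\|\gl R_\gl g-g\|\to 0$ for $g$ in the range, and density does the rest. The paper never argues this way; it routes (f)$\Rightarrow$(d)$\Rightarrow$(a)$\Rightarrow$(b) and deduces the resolvent's strong continuity from the semigroup's via lemma~\ref{lem_3}. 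Your version is more self-contained for that step and makes (b) reachable without first establishing (a). Second, and more significantly, for the hard implication you invoke Hille--Yosida to produce a comparison semigroup $T$ and then identify $U_t=T_t$ for every $t$ by the inversion formula~\eqref{inv_L}; the paper avoids Hille--Yosida entirely, proving strong continuity of $U$ on $RB(E)$ by a direct Fubini/Markov-property computation (lemma~\ref{lem_4}, which yields the explicit bound $\|U_tR_\gl f-R_\gl f\|\le 2\gl^{-1}(1-e^{-\gl t})\|f\|$) and using the inversion formula only for the mapping property $U_t f\in\Coe$ (second half of lemma~\ref{lem_6}). The paper's route is lighter on machinery; yours buys the identification of $U$ with an abstractly constructed Feller semigroup, which is a stronger-sounding but not needed conclusion. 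Both routes stand or fall with lemma~\ref{lem_5}, which you rightly flag as the technical heart and which the paper itself only cites from Ray. One small caution on your phrasing ``at every $t$'': you should restrict the use of \eqref{inv_L} to $t>0$ (at $t=0$ the identification $U_0=T_0=\mathrm{id}$ is trivial anyway); this costs nothing, since the transfer of strong continuity only requires $\|U_tf-f\|=\|T_tf-f\|$ for $t>0$.
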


We prepare a sequence of lemmas. The first one follows directly from the
dominated convergence theorem:

\begin{lemma}   \label{lem_3}
Assume that for $f\in B(E)$, $U_t f \rightarrow f$ as $t\downarrow 0$. Then $\gl
R_\gl f \rightarrow f$ as $\gl\to+\infty$.
\end{lemma}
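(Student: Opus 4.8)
The plan is to reduce the whole assertion to a single application of the dominated convergence theorem after a rescaling of the time variable. First I would record the elementary normalization $\int_0^\infty \gl e^{-\gl t}\, dt = 1$, valid for every $\gl>0$, which lets me write $f(x) = \gl\int_0^\infty e^{-\gl t} f(x)\, dt$. Combining this with the definition~\eqref{eq_2} of $R_\gl$ gives, for every $x\in E$,
\begin{equation*}
    \gl R_\gl f(x) - f(x) = \gl\int_0^\infty e^{-\gl t}\bigl(U_t f(x) - f(x)\bigr)\, dt .
\end{equation*}

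The crucial step is then the substitution $s=\gl t$, which transfers the $\gl$--dependence off the integrating kernel and onto the argument of $U$:
\begin{equation*}
    \gl R_\gl f(x) - f(x) = \int_0^\infty e^{-s}\bigl(U_{s/\gl} f(x) - f(x)\bigr)\, ds .
\end{equation*}
Here the measure $e^{-s}\, ds$ no longer depends on $\gl$. This is really the only point that requires thought: in the original kernel $\gl e^{-\gl t}$ there is no $\gl$--uniform integrable majorant, whereas after rescaling such a majorant is immediate. Everything after this is a routine verification of the two hypotheses of the dominated convergence theorem.

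For pointwise convergence of the integrand, fix $s>0$; then $s/\gl\to 0$ as $\gl\to+\infty$, so $U_{s/\gl}f(x)-f(x)\to 0$ by the assumption $U_t f\to f$ as $t\downarrow 0$. For domination, property~(a) of Definition~\ref{def_1} yields $\|U_t\|\le 1$, hence $\|U_{s/\gl}f\|\le\|f\|$ and therefore $|U_{s/\gl}f(x)-f(x)|\le 2\|f\|$; thus the integrand is bounded in absolute value by $2\|f\|\, e^{-s}$, which is integrable on $(0,\infty)$. The dominated convergence theorem then gives $\gl R_\gl f(x) - f(x)\to 0$ as $\gl\to+\infty$.

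Finally I would note that the same computation delivers the conclusion in whichever sense the hypothesis is read. If $U_t f\to f$ only pointwise, the argument above gives pointwise convergence $\gl R_\gl f(x)\to f(x)$. If instead $U_t f\to f$ in sup--norm, I would first bound $\|\gl R_\gl f - f\|\le\int_0^\infty e^{-s}\,\|U_{s/\gl}f - f\|\, ds$ and apply the dominated convergence theorem to this numerical integral, with the same majorant $2\|f\|\, e^{-s}$. Either way, no genuine obstacle remains beyond selecting the rescaling that exposes the $\gl$--independent dominating function.
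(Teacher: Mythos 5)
Your proof is correct and takes essentially the same route as the paper, which dispenses with Lemma~\ref{lem_3} by remarking that it ``follows directly from the dominated convergence theorem'': your substitution $s=\gl t$ is precisely the standard device that makes that remark rigorous, since it replaces the $\gl$--dependent kernel $\gl e^{-\gl t}\,dt$ by the fixed measure $e^{-s}\,ds$ with majorant $2\|f\|e^{-s}$. Your closing observation that the argument works both pointwise and in sup--norm is also apt, as the paper uses the lemma in the norm sense in the implication (a)$\Rightarrow$(b) and invokes the same computation pointwise just before Corollary~\ref{cor_8}.
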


\begin{lemma}   \label{lem_4}
The semigroup $U$ is strongly continuous on $RB(E)$.
\end{lemma}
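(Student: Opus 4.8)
The plan is to verify strong continuity of $U$ directly on the image $RB(E)$, that is, to show $\|U_t g - g\| \to 0$ as $t \downarrow 0$ for every $g \in RB(E)$. Since the resolvent equation makes $RB(E)$ independent of $\gl$, I fix an arbitrary $\gl > 0$ and write $g = R_\gl f$ with $f \in B(E)$. The key is the explicit Laplace representation \eqref{eq_2}, which lets me recast $U_t R_\gl f$ in a form whose behavior near $t = 0$ is transparent.

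First I would compute $U_t R_\gl f$. Writing $U_t$ via \eqref{eq_1} and inserting \eqref{eq_2}, I would interchange the expectation $E_x(\,\cdot\,)$ with the time integral by Fubini's theorem and then use the semigroup property $U_t\comp U_s = U_{t+s}$. This produces
\[
U_t R_\gl f = \int_0^\infty e^{-\gl s}\, U_{t+s} f \, ds,
\]
and the substitution $r = t + s$ rewrites the right-hand side as $e^{\gl t}\int_t^\infty e^{-\gl r}\, U_r f \, dr$.

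The second step is to subtract $g = R_\gl f = \int_0^\infty e^{-\gl r} U_r f\, dr$ and split the integral at $r = t$, which yields
\[
U_t R_\gl f - R_\gl f = (e^{\gl t} - 1)\, R_\gl f - e^{\gl t}\int_0^t e^{-\gl r}\, U_r f \, dr .
\]
I would then estimate both terms in the sup-norm. The first satisfies $\|(e^{\gl t} - 1) R_\gl f\| = (e^{\gl t} - 1)\,\|R_\gl f\| \to 0$ as $t \downarrow 0$. For the second, the contractivity $\|U_r\|\le 1$ gives
\[
\Bigl\| e^{\gl t}\int_0^t e^{-\gl r}\, U_r f \, dr \Bigr\| \le e^{\gl t}\,\|f\|\int_0^t e^{-\gl r}\, dr = \frac{e^{\gl t} - 1}{\gl}\,\|f\| ,
\]
which likewise tends to $0$. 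Combining the two estimates gives $\|U_t g - g\| \to 0$, as required.

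I do not anticipate a genuine obstacle; the argument is essentially a direct computation resting on the Laplace representation \eqref{eq_2} and the contractivity $\|U_t\| \le 1$. The single point demanding care is the interchange of $E_x(\,\cdot\,)$ with the $ds$--integral in the first step. This is legitimate by Fubini's theorem, since the integrand $(s,\go) \mapsto e^{-\gl s}\, (U_s f)(X_t(\go))$ is dominated in modulus by $e^{-\gl s}\|f\|$, which is integrable in $s$ over $(0,\infty)$, while the standing joint measurability assumption on $(t,x)\mapsto E_x(f(X_t))$ supplies the necessary measurability. Once this interchange is justified, the remaining manipulations are elementary.
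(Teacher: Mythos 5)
Your proof is correct and follows essentially the same route as the paper: the Laplace representation \eqref{eq_2}, Fubini together with the Markov/semigroup property to obtain $U_t R_\gl f = e^{\gl t}\int_t^\infty e^{-\gl r}\,U_r f\,dr$, a split of the integral at $r=t$, and contractivity estimates giving a bound of order $(e^{\gl t}-1)/\gl$ --- your algebraic regrouping of the two terms differs only cosmetically from the paper's. The one thing you omit is the reduction the paper makes at the outset: the lemma asserts strong continuity at every $t\ge 0$, and continuity at $t>0$ is deduced from continuity at $t=0$ using that $U$ and $R$ commute (so $RB(E)$ is invariant under $U_t$) and that $U_t$ is a contraction; you should add this one line.
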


\begin{proof}
If strong continuity at $t=0$ has been shown, strong continuity at $t>0$ follows
from the semigroup property of $U$, and the fact that $U$ and $R$ commute. Therefore
it is enough to show strong continuity at $t=0$.

Let $f\in B(E)$, $\gl>0$, $t>0$, and consider for $x\in E$ the following computation
\begin{align*}
    U_t R_\gl f(x) &- R_\gl f(x)\\
        &= \int_0^\infty e^{-\gl s} E_x\bigl(f(X_{t+s})\bigr)\,ds
                - \int_0^\infty e^{-\gl s} E_x\bigl(f(X_s)\bigr)\,ds\\
        &= e^{\gl t} \int_t^\infty e^{-\gl s} E_x\bigl(f(X_s)\bigr)\,ds
                - \int_0^\infty e^{-\gl s} E_x\bigl(f(X_s)\bigr)\,ds\\
        &= \bigl(e^{\gl t}-1\bigr) \int_t^\infty e^{-\gl s} E_x\bigl(f(X_s)\bigr)\,ds
                - \int_0^t e^{-\gl s} E_x\bigl(f(X_s)\bigr)\,ds\\
\end{align*}
where we used Fubini's theorem and the Markov property of $X$. Thus we get the
following estimation
\begin{align*}
    \bigl\|U_t R_\gl f - R_\gl f\bigr\|
        &\le \biggl(\bigl(e^{\gl t} - 1\bigr)\int_t^\infty e^{-\gl s}\,ds
                +\int_0^t e^{-\gl s}\,ds\biggl)\, \|f\|\\
        &= \frac{2}{\gl}\, \bigl(1-e^{-\gl t}\bigr)\,\|f\|,
\end{align*}
which converges to zero as $t$ decreases to zero.
\end{proof}

For $\gl>0$, $t\ge 0$, $f\in B(E)$, $x\in E$ set
\begin{equation}    \label{inv_L}
    U^\gl_t f(x) = \sum_{n=1}^\infty \frac{(-1)^{n+1}}{n!}
                        \,n\gl\, e^{n\gl t}\, R_{n\gl} f(x).
\end{equation}
Observe that, because of $n\gl\|R_{n\gl} f\| \le \|f\|$, the last sum converges in
$B(E)$.

For the proof of the next lemma we refer the reader
to~\cite[p.~477~f]{Ra56}:

\begin{lemma}   \label{lem_5}
For all $t\ge 0$, $f\in RB(E)$, $U^\gl_t f$ converges in $B(E)$ to $U_t f$ as
$\gl$ tends to infinity.
\end{lemma}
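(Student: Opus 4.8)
The plan is to read the right-hand side of~\eqref{inv_L} as a Laplace inversion of Phragm\'en--Mikusi\'nski type and to recognise it as the pairing of $s\mapsto U_s f$ with an approximate identity that concentrates at $s=t$. The decisive structural input will be Lemma~\ref{lem_4}: since $f\in RB(E)$, the orbit $s\mapsto U_s f$ is continuous \emph{in the norm of $B(E)$}, and it is exactly this uniform (in $x$) modulus of continuity that will turn pointwise convergence into convergence in $B(E)$.

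First I would insert the Laplace representation $R_{n\gl}f(x)=\int_0^\infty e^{-n\gl s}U_s f(x)\,ds$ into~\eqref{inv_L} and interchange summation and integration. Although the factor $(-1)^{n+1}$ makes the series look merely conditionally convergent, the bounds $\|n\gl R_{n\gl}f\|\le\|f\|$ and $\int_0^\infty e^{-n\gl s}|U_s f(x)|\,ds\le\|f\|/(n\gl)$ produce the explicit majorant $\sum_n e^{n\gl t}\|f\|/n!=(e^{e^{\gl t}}-1)\|f\|$, so Fubini's theorem applies. Summing the resulting series term by term (using $n/n!=1/(n-1)!$ and $\sum_{m\ge 0}(-1)^m c^m/m!=e^{-c}$) collapses~\eqref{inv_L} to a single integral against an explicit kernel:
\begin{equation*}
    U^\gl_t f(x)=\int_0^\infty U_s f(x)\,K_\gl(s)\,ds,\qquad K_\gl(s)=\gl\,e^{-\gl(s-t)}\exp\!\bigl(-e^{-\gl(s-t)}\bigr).
\end{equation*}

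Next I would check, for $t>0$, that $(K_\gl)_{\gl>0}$ is an approximate identity at $s=t$. The substitutions $u=\gl(s-t)$ and $v=e^{-u}$ give the total mass $\int_0^\infty K_\gl(s)\,ds=1-e^{-e^{\gl t}}\to 1$, and the same change of variables (the double exponential killing the contribution from $u\to-\infty$) shows $\int_{|s-t|\ge\gd}K_\gl(s)\,ds\to 0$ for every $\gd>0$. The standard split
\begin{equation*}
    U^\gl_t f(x)-U_t f(x)=\int_0^\infty\bigl(U_s f(x)-U_t f(x)\bigr)K_\gl(s)\,ds+U_t f(x)\Bigl(\textstyle\int_0^\infty K_\gl(s)\,ds-1\Bigr)
\end{equation*}
then finishes the $t>0$ case: the near-diagonal term is bounded by $\sup_{|s-t|<\gd}\|U_s f-U_t f\|$, the far term by $2\|f\|\int_{|s-t|\ge\gd}K_\gl$, and the last term by $\|f\|\,e^{-e^{\gl t}}$. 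Each bound is uniform in $x$, the first precisely because Lemma~\ref{lem_4} controls the full norm $\|U_s f-U_t f\|$ rather than its value at a single point; hence the convergence is in $B(E)$.

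The step I expect to be the real obstacle is the endpoint $t=0$. There the kernel piles up one-sidedly at the boundary of $[0,\infty)$ and the change of variables yields total mass $1-e^{-1}$ rather than $1$, so the concentration argument above does not transfer verbatim and $t=0$ must be handled separately (the value $U_0 f=f$ being available \emph{a priori} from $U_0=\mathrm{id}$); the genuinely operative content of the inversion formula is therefore the $t>0$ statement. Everything else—the absolute convergence legitimising Fubini, the summation of the series, and the extraction of an $x$-independent modulus of continuity from Lemma~\ref{lem_4}—is bookkeeping rather than a conceptual difficulty.
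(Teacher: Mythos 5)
The paper gives no proof of Lemma~\ref{lem_5} at all --- it defers to Ray \cite[p.~477~f]{Ra56} --- and what you have written is essentially Ray's argument: \eqref{inv_L} is the Phragm\'en inversion of the Laplace transform, Fubini (justified by your majorant $(e^{e^{\gl t}}-1)\|f\|$) collapses the series to integration against the double-exponential kernel $K_\gl(s)=\gl e^{-\gl(s-t)}\exp(-e^{-\gl(s-t)})$, this kernel is an approximate identity at $s=t$ for $t>0$, and the norm continuity of $s\mapsto U_sf$ on $RB(E)$ from Lemma~\ref{lem_4} upgrades the pointwise statement to convergence in $B(E)$. The kernel computation, the mass $1-e^{-e^{\gl t}}$, and the concentration estimates all check out, so the $t>0$ case is complete and matches the cited source.

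The one point you should push to its conclusion is the endpoint. You say $t=0$ ``must be handled separately,'' but your own computation already decides it, and not in the lemma's favour: substituting $u=\gl s$ gives $U^\gl_0f(x)=\int_0^\infty U_{u/\gl}f(x)\,e^{-u}e^{-e^{-u}}\,du$, and since $U_{u/\gl}f\to f$ in norm (Lemma~\ref{lem_4} again) with majorant $\|f\|e^{-u}$, dominated convergence yields $U^\gl_0f\to(1-e^{-1})f$, not $f$. So the statement is actually false at $t=0$ for $f\ne 0$; there is nothing to handle separately, and the hypothesis should simply read $t>0$. This is harmless for the paper, since the only application of Lemma~\ref{lem_5} is in the proof of Lemma~\ref{lem_6}, where $t>0$ is assumed and the $t=0$ case is trivial because $U_0=\mathrm{id}$. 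Apart from flagging this misstatement explicitly rather than deferring it, your proof needs no supplement.
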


\begin{lemma}   \label{lem_6}
If $U_t\Coe \subset \Coe$ for all $t\ge 0$, then $R_\gl\Coe\subset \Coe$, for all $\gl>0$.
If $R_\gl\Coe\subset \Coe$, for some $\gl>0$, and $R_\gl\Coe$ is dense in $\Coe$, then
$U_t\Coe \subset \Coe$ for all $t\ge 0$.
\end{lemma}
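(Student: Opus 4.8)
The plan is to prove the two implications separately, each time exploiting that $\Coe$ is a \emph{closed} subspace of $B(E)$ for the sup-norm, so that sup-norm limits, norm-convergent series, and norm-convergent integrals of elements of $\Coe$ again lie in $\Coe$.

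For the first implication I assume $U_t\Coe\subset\Coe$ for every $t\ge 0$ and fix $f\in\Coe$ and $\gl>0$. I would argue directly from the pointwise formula $R_\gl f(x)=\int_0^\infty e^{-\gl t}U_t f(x)\,dt$ that $R_\gl f$ is continuous and vanishes at infinity, in both cases with dominating function $e^{-\gl t}\|f\|$, which is integrable on $[0,\infty)$. For \emph{continuity}: if $x_k\to x$ in $E$, then $U_t f(x_k)\to U_t f(x)$ for each $t$ because $U_t f\in\Coe$, and dominated convergence gives $R_\gl f(x_k)\to R_\gl f(x)$; as $E$ is metric this yields continuity of $R_\gl f$. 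For \emph{vanishing at infinity}: since $E$ is locally compact, separable and metric, it is $\sigma$-compact, so if $R_\gl f$ failed to vanish at infinity there would be $\gep>0$ and a sequence $x_k$ leaving every compact set with $|R_\gl f(x_k)|\ge\gep$; but $U_t f(x_k)\to 0$ for each fixed $t$ (as $U_t f\in\Coe$), whence dominated convergence forces $R_\gl f(x_k)\to 0$, a contradiction. Hence $R_\gl f\in\Coe$ for every $\gl>0$. The delicate point here is precisely the vanishing at infinity, where uniformity in $t$ is unavailable and must be circumvented by this $\sigma$-compactness argument.

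For the second implication I assume $R_{\gl_0}\Coe\subset\Coe$ for some $\gl_0>0$, with $R_{\gl_0}\Coe$ dense in $\Coe$. The inversion formula~\eqref{inv_L} underlying lemma~\ref{lem_5} involves $R_{n\gl}$ for all $n\ge 1$, so I would first upgrade the hypothesis to $R_\mu\Coe\subset\Coe$ for \emph{every} $\mu>0$. This follows from analyticity of the resolvent: the resolvent equation yields $R_\mu=\sum_{k=0}^\infty(\gl_0-\mu)^k R_{\gl_0}^{k+1}$, a series convergent in operator norm for $|\mu-\gl_0|<\gl_0$ (using $\|R_{\gl_0}\|\le\gl_0^{-1}$); each summand maps $\Coe$ into $\Coe$ by iterating the hypothesis, so closedness of $\Coe$ gives $R_\mu\Coe\subset\Coe$ for $0<\mu<2\gl_0$, and a doubling/bootstrap argument in $\mu$ then covers all $\mu>0$.

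With this in hand I take first $f\in R_{\gl_0}\Coe$. On the one hand $f\in\Coe$, so each $R_{n\gl}f\in\Coe$ and, the series in~\eqref{inv_L} converging in $B(E)$, $U^\gl_t f\in\Coe$ for every $\gl$ and $t$. On the other hand $f\in R_{\gl_0}\Coe\subset RB(E)$, so lemma~\ref{lem_5} gives $U^\gl_t f\to U_t f$ in $B(E)$ as $\gl\to\infty$; closedness of $\Coe$ then forces $U_t f\in\Coe$. Thus $U_t$ maps $R_{\gl_0}\Coe$ into $\Coe$. Finally, since $R_{\gl_0}\Coe$ is dense in $\Coe$ and $\|U_t\|\le 1$, for arbitrary $f\in\Coe$ I approximate $f$ in sup-norm by $f_k\in R_{\gl_0}\Coe$, note $U_t f_k\to U_t f$ with each $U_t f_k\in\Coe$, and conclude $U_t f\in\Coe$. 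I expect the main obstacle to be this second implication: namely, passing from the single given $\gl_0$ to all $\mu$ and then deploying the inversion formula on the dense set $R_{\gl_0}\Coe$, where the density hypothesis is exactly what allows the conclusion for $R_{\gl_0}\Coe$ to spread to all of $\Coe$; once these are arranged, everything reduces to the closedness of $\Coe$.
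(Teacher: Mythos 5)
Your proposal is correct and follows essentially the same route as the paper: dominated convergence under the integrable bound $e^{-\gl t}\|f\|$ for the first implication, and for the second the inversion formula~\eqref{inv_L} together with lemma~\ref{lem_5}, the closedness of $\Coe$, the contractivity of $U_t$ and the density hypothesis. The only difference is that you spell out two points the paper leaves implicit --- the vanishing at infinity of $R_\gl f$ via the $\sigma$-compactness of $E$, and the Neumann-series bootstrap from the single given $\gl_0$ to all $\mu>0$ (for which the paper instead invokes its earlier remark that the image of a subspace under $R_\gl$ does not depend on $\gl$) --- both of which are sound and worth making explicit.
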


\begin{proof}
Assume that  $U_t\Coe \subset \Coe$ for all $t\ge 0$, let $f\in\Coe$, $x\in E$, and
suppose that $(x_n,\,n\in\N)$ is a sequence converging in $(E,d)$ to $x$. Then a
straightforward application of the dominated convergence theorem shows that for
every $\gl>0$, $R_\gl f(x_n)$ converges to $R_\gl f(x)$. Hence $R_\gl f\in \Coe$.

Now assume that that $R_\gl\Coe\subset \Coe$, for some and therefore for all
$\gl>0$, and that $R_\gl\Coe$ is dense in $\Coe$. Consider $f\in R\Coe$, $t>0$, and
for $\gl>0$ define $U^\gl_t f$ as in equation~\eqref{inv_L}. Because
$R_{n\gl}f\in\Coe$ and the series in formula~\eqref{inv_L} converges uniformly in
$x\in E$, we get $U^\gl_t f\in\Coe$. By lemma~\ref{lem_5}, we find that $U^\gl_t
f$ converges uniformly to $U_t f$ as $\gl\to+\infty$. Hence $U_t f\in\Coe$. Since
$R\Coe$ is dense in $\Coe$, $U_t$ is a contraction and $\Coe$ is closed, we get that
$U_t\Coe\subset\Coe$ for every $t\ge 0$.
\end{proof}

The following lemma is proved as a part of Theorem~17.4 in~\cite{Ka97}
(cf.\ also the proof of Proposition~2.4 in~\cite{ReYo91}).

\begin{lemma}   \label{lem_7}
Assume that $R\Coe\subset \Coe$, and that for all $x\in E$, $f\in\Coe$,
$\lim_{\gl\to\infty} \gl R_\gl f(x) = f(x)$. Then $R\Coe$ is dense in $\Coe$.
\end{lemma}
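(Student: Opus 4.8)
The plan is to argue by contradiction through duality. Since $E$ is locally compact separable metric, the Riesz representation theorem identifies the dual of $\Coe = C_0(E)$ with the space of finite signed (regular Borel) measures on $E$. Thus if $R\Coe$ failed to be dense, the Hahn--Banach theorem would produce a nonzero finite signed measure $\mu$ annihilating the closed subspace $\overline{R\Coe}$; recalling that the image $R\Coe$ is independent of $\gl$, this means
\begin{equation*}
    \int_E R_\gl f\,d\mu = 0 \qquad \text{for all } f\in\Coe,\ \gl>0.
\end{equation*}

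First I would combine the pointwise hypothesis with the uniform contraction bound from property~(a) of Definition~\ref{def_2}. For $f\in\Coe$ and $\gl>0$ one has $|\gl R_\gl f(x)| \le \gl\,\|R_\gl\|\,\|f\| \le \|f\|$ for every $x\in E$, while by assumption $\gl R_\gl f(x) \to f(x)$ as $\gl\to\infty$. The constant majorant $\|f\|$ is $|\mu|$--integrable precisely because $\mu$ is finite, so the dominated convergence theorem applies to the family $(\gl R_\gl f,\,\gl>0)$.

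Then I would multiply the annihilation identity by $\gl$ and let $\gl\to\infty$: since $\int_E \gl R_\gl f\,d\mu = 0$ for every $\gl$, dominated convergence gives $\int_E f\,d\mu = 0$ for every $f\in\Coe$. A finite signed measure is determined by its action on $\Coe$, so $\mu = 0$, contradicting the choice of $\mu$. Hence $R\Coe$ is dense in $\Coe$.

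The main obstacle is essentially bookkeeping rather than a deep difficulty: the one point requiring care is the identification of $(\Coe)^\ast$ with \emph{finite} measures, as it is finiteness of $\mu$ that makes the dominating function $\|f\|$ integrable and thereby licenses the interchange of limit and integral. The remaining ingredients---the uniform bound $\|\gl R_\gl\|\le 1$ and the $\gl$--independence of the image $R\Coe$ (so that vanishing on the single image used in Hahn--Banach already gives vanishing for all $\gl$)---are immediate from the hypotheses and the earlier remarks.
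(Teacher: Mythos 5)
Your proof is correct, and it is essentially the argument the paper points to: the paper does not prove Lemma~\ref{lem_7} itself but defers to Theorem~17.4 of \cite{Ka97} and Proposition~2.4 of \cite{ReYo91}, whose proof is exactly this Hahn--Banach/Riesz duality argument combined with the uniform bound $\|\gl R_\gl f\|\le\|f\|$ and dominated convergence against the finite annihilating measure. No gaps; the one point you flag (finiteness of the representing measure) is indeed the only place requiring care.
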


If for all $f\in\Coe$, $x\in E$, $U_t f(x)$ converges to $f(x)$ as $t$ decreases to
zero, then similarly as in the proof of lemma~\ref{lem_3} we get that $\gl R_\gl
f(x)$ converges to $f(x)$ as $\gl\to+\infty$. Thus we obtain the following

\begin{corollary}   \label{cor_8}
Assume that $R\Coe\subset \Coe$, and that for all $x\in E$, $f\in\Coe$,
$\lim_{t\downarrow 0} U_t f(x) = f(x)$. Then $R\Coe$ is dense in $\Coe$.
\end{corollary}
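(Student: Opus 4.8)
The plan is to deduce this corollary directly from lemma~\ref{lem_7}. That lemma has exactly two hypotheses: the inclusion $R\Coe\subset\Coe$, which is assumed here verbatim, and the pointwise convergence $\gl R_\gl f(x)\to f(x)$ as $\gl\to+\infty$ for all $x\in E$ and $f\in\Coe$. The inclusion being granted, the only thing left to establish is this second hypothesis, and I would obtain it from the given pointwise semigroup convergence $U_t f(x)\to f(x)$ as $t\downarrow 0$, running the argument of lemma~\ref{lem_3} at each point $x$ separately rather than in sup-norm.

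Concretely, starting from the definition~\eqref{eq_2} and substituting $s=\gl t$ in the integral, one obtains for each fixed $x\in E$
\[
    \gl R_\gl f(x) = \int_0^\infty e^{-s}\, U_{s/\gl} f(x)\,ds.
\]
As $\gl\to+\infty$ the argument $s/\gl$ tends to $0$ for every fixed $s>0$, so by the hypothesis of the corollary the integrand converges pointwise in $s$ to $e^{-s}f(x)$. Since $\bigl|e^{-s} U_{s/\gl}f(x)\bigr|\le e^{-s}\|f\|$ and $s\mapsto e^{-s}\|f\|$ is integrable on $(0,\infty)$, the dominated convergence theorem gives $\gl R_\gl f(x)\to\int_0^\infty e^{-s}f(x)\,ds = f(x)$, which is precisely the pointwise resolvent convergence required.

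With both hypotheses of lemma~\ref{lem_7} now verified, that lemma immediately yields that $R\Coe$ is dense in $\Coe$, finishing the argument. The only point requiring any care is the justification of dominated convergence, but this is entirely routine: the substitution is the same as in lemma~\ref{lem_3} and the uniform integrable bound $e^{-s}\|f\|$ is visible at once, so there is no genuine obstacle. The substance of the corollary lies wholly in lemma~\ref{lem_7}, and the present statement merely feeds it the resolvent hypothesis after trading the semigroup convergence for the Laplace-transform convergence.
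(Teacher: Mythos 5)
Your proof is correct and is essentially the paper's own argument: the authors likewise obtain the pointwise convergence $\gl R_\gl f(x)\to f(x)$ by running the dominated-convergence argument of lemma~\ref{lem_3} pointwise in $x$, and then invoke lemma~\ref{lem_7}. Nothing further is needed.
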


Now we can come to the

\begin{proof}[Proof of the theorem]
We begin by proving the equivalence of statements~(a), (b), (d), and~(f):

\vspace{.5\baselineskip}\noindent
``(a)\Ra(b)'' Assume that $U$ is Feller. From lemma~\ref{lem_6} it follows that
$R_\gl\Coe\subset\Coe$, $\gl>0$. Let $f\in\Coe$. Since $U$ is strongly continuous on
$\Coe$, lemma~\ref{lem_3} implies that $\gl R_\gl f$ converges to $f$ as $\gl$ tends
to $+\infty$. Hence $R$ is Feller.

\vspace{.25\baselineskip}\noindent
``(b)\Ra(f)'' This is trivial.

\vspace{.25\baselineskip}\noindent
``(f)\Ra(d)'' By lemma~\ref{lem_7}, $R\Coe$ is dense in $\Coe$, and therefore
lemma~\ref{lem_6} entails that $U\Coe\subset\Coe$.

\vspace{.25\baselineskip}\noindent
``(d)\Ra(a)'' By lemmas~\ref{lem_6} and \ref{lem_7},  $R\Coe$ is dense in
$\Coe$, and therefore by lemma~\ref{lem_4}, $U$ is strongly continuous on $\Coe$. Thus
$U$ is Feller.

\vspace{.25\baselineskip}
Now we prove the equivalence of~(a), (c), and (e):

\vspace{.25\baselineskip}\noindent
``(a)\Ra(c)'' This is trivial.

\vspace{.25\baselineskip}\noindent
``(c)\Ra(e)'' This follows directly from Lemma~\ref{lem_6}.

\vspace{.25\baselineskip}\noindent
``(e)\Ra(a)'' By corollary~\ref{cor_8}, $R\Coe$ is dense in $\Coe$, hence it follows
from lem\-ma~\ref{lem_6} that $U\Coe\subset\Coe$. Furthermore, lemma~\ref{lem_4}
implies the strong continuity of $U$ on $R\Coe$, and by density therefore on $\Coe$.
(a) follows.
\end{proof}

\vspace{.5\baselineskip}\noindent
\textbf{Acknowledgement.} J.~P. thanks Florian Werner for a useful discussion.

\providecommand{\bysame}{\leavevmode\hbox to3em{\hrulefill}\thinspace}
\providecommand{\MR}{\relax\ifhmode\unskip\space\fi MR }
% \MRhref is called by the amsart/book/proc definition of \MR.
\providecommand{\MRhref}[2]{%
  \href{http://www.ams.org/mathscinet-getitem?mr=#1}{#2}
}
\providecommand{\href}[2]{#2}

\end{document}